\tikzset{>=stealth}
\newcommand{\R}{\mathbb{R}}
\DeclareMathOperator{\CAT}{CAT}
\DeclareMathOperator{\co}{co}
\DeclareMathOperator{\cco}{\overline{co}}
\DeclareMathOperator{\diam}{diam}
\newtheorem{thm}{Theorem}[section]
\newtheorem{prop}[thm]{Proposition}
\newtheorem{lem}[thm]{Lemma} 
\newtheorem{cor}[thm]{Corollary} 
\theoremstyle{definition}
\newtheorem{defn}[thm]{Definition}
\newtheorem{ex}[thm]{Example}
\newtheorem*{rem}{Remark} 
\newtheorem*{ack}{Acknowledgements}
\title{Geodesic bicombings on some hyperspaces}
\author{Logan S. Fox\footnote{Fariborz Maseeh Dept. of Math. and Stat., Portland State University; logfox@pdx.edu}} 
\date{March 23, 2022} % make sure to check the date
\begin{document}

\maketitle

\begin{abstract} 
We show that if \((X,d)\) is a metric space which admits a consistent convex geodesic bicombing, then we can construct a conical bicombing on \(CB(X)\), the hyperspace of nonempty, closed, bounded, and convex subsets of \(X\) (with the Hausdorff metric). If \(X\) is a normed space or an \(\R\)-tree, this same method produces a consistent convex bicombing on \(CB(X)\). 
We follow this by examining a geodesic bicombing on the nonempty compact subsets of \(X\), assuming \(X\) is a proper metric space.
\end{abstract}

\section{Introduction} %%%%% INTRODUCTON %%%%%

A \emph{hyperspace} is a topological space whose elements are subsets of some other topological space (called the \emph{base space}). Perhaps the most common example of a hyperspace is to take a metric space as the base space and form the (metric) hyperspace of nonempty compact subsets, together with the Hausdorff metric. What makes this example particularly interesting is that the hyperspace of nonempty compact subsets inherits many of the topological properties from the base space \cite{michael}. Our intention is to examine whether or not a hyperspace can inherit any \emph{geometric} properties from the base space. Some progress in this direction was made in \cite{sosov}, where it was shown that the Hausdorff metric is intrinsic if and only if the metric of the base space is intrinsic. We take this further by constructing \emph{geodesic bicombings} for some hyperspaces. 

\begin{defn}\label{defn: bicombing}
Let \((X,d)\) be a metric space. A \emph{geodesic bicombing} on \(X\) is a function \(\sigma : X \times X \times [0,1] \to X\) 
such that for any \(x,y\in X\) and \(s,t\in [0,1]\); \(\sigma(x,y,0) = x\), \(\sigma(x,y,1) = y\), and 
\[ d ( \sigma(x,y,s) , \sigma(x,y,t) ) = |t-s| d(x,y) . \] 
In other words, \(\sigma(x,y,\cdot)\) is a linearly reparametrized geodesic emanating from \(x\) and terminating at \(y\). 
The bicombing \(\sigma\) is \emph{conical} if for any \(a,b,c,d\in X\) and \(t\in [0,1]\), 
\[ d( \sigma(a,b,t) , \sigma(c,d,t) ) \leq (1-t) d(a,c) + t d(b,d) . \] 
We say that \(\sigma\) is \emph{convex} if for all \(a,b,c,d\in X\), the map \(t \mapsto d\big( \sigma(a,b,t) , \sigma(c,d,t) \big)\) is convex on \([0,1]\). 
Finally, we say that \(\sigma\) is \emph{consistent} if for every \(x,y\in X\) and \(r,s,t\in [0,1]\) with \(r<s\), 
\[ \sigma(\sigma(x,y,r),\sigma(x,y,s) , t) = \sigma(x,y, (1-t)r + t s ) . \] 
\end{defn}

See \cite{descombes} or \cite{DL2015} for a further discussion on the different kinds of bicombings and their relationship to spaces of nonpositive curvature. We note that every convex bicombing is conical, and every consistent conical bicombing is convex. 
Furthermore, every normed space (and every convex subset of a normed space) admits a consistent convex bicombing defined by 
\begin{equation}\label{eq: normed bicombing} 
\sigma(x,y,t) = (1-t)x + ty . 
\end{equation} 

Our main motivation comes from the following observation for normed spaces. 
Let \(X\) be a real normed space and let \(CB(X)\) be the collection of nonempty, closed, bounded, convex subsets of \(X\) (equipped with the Hausdorff metric). H\"ormander's embedding theorem (Theorem \ref{thm: hmndr}) tells us that \(CB(X)\) can be algebraically and isometrically embedded as a convex cone in a Banach space. It follows immediately that \(CB(X)\) admits a consistent convex bicombing. 
We generalize this by showing that if \(X\) is a metric space which admits a consistent convex bicombing, then we can construct a conical bicombing on \(CB(X)\) (Theorem \ref{thm: conical}).

In Section \ref{sec: hyperspaces} we establish some necessary results concerning the Hausdorff metric. In Section \ref{sec: bicombings cb(x)} we use H\"ormander's embedding theorem to define a consistent convex bicombing on \(CB(X)\), assuming \(X\) is a normed space. This method is then generalized to construct a conical bicombing on \(CB(X)\) for any \(X\) that admits a consistent convex bicombing. In Section \ref{sec: cases}, we give two special cases where the bicombing on \(CB(X)\) is consistent. Finally, in Section \ref{sec: bicombing k(x)} we define a geodesic bicombing for the nonempty compact subsets of a proper geodesic space. However, we show that this final bicombing is not necessarily conical even if the base space admits a consistent convex bicombing.

\section{Hyperspaces with the Hausdorff Metric}\label{sec: hyperspaces} %%%%% HYPERSPACES %%%%% 

Let \((X,d)\) be a metric space. We quickly recall that the distance from a point \(x\in X\) to a nonempty set \(A\subseteq X\) is \(d(x,A) = \inf_{a\in A} d(x,a)\) and the \emph{closed epsilon neighborhood} of a set \(A\) is 
\[ N_\varepsilon(A) = \{ x \in X : d(x,A) \leq \varepsilon \} . \] 
We now define the \emph{Hausdorff distance} between two nonempty sets \(A,B\subseteq X\) as 
\[ d_H(A,B) = \inf \{ \varepsilon >0 : A\subseteq N_{\varepsilon}(B) \text{ and } B \subseteq N_{\varepsilon}(A) \} . \] 
Given an arbitrary collection of subsets, the Hausdorff distance does not necessarily satisfy the axioms of a metric; however, it is a metric on the collections 
\begin{align*} 
B(X) & = \{ A\subseteq X : A \text{ is nonempty, closed, and bounded} \} \\ 
\text{and } K(X) & = \{ A\subseteq X : A \text{ is nonempty and compact} \} . 
\end{align*} 
If we further assume that \(X\) is a geodesic space, then we can consider subcollections of the above by requiring the sets in the collection be convex. 

\begin{defn}
Let \(X\) be a metric space with a geodesic bicombing \(\sigma\). A set \(C\subseteq X\) is \emph{convex}\footnote{Other authors may call this \(\sigma\)-convex, as it does in fact depend on the bicombing \(\sigma\).} if \(\sigma(x,y,t) \in C\) for every \(x,y\in C\) and \(t\in [0,1]\). Given a set \(A\subseteq X\), the \emph{convex hull} of \(A\), denoted by \(\co(A)\), is the intersection of all convex sets containing \(A\); and the \emph{closed convex hull} of \(A\), denoted by \(\cco(A)\), is the intersection of all closed convex sets containing \(A\). 
\end{defn} 

With this definition of convex sets in mind, whenever \(X\) is a geodesic space, we can consider the collections 
\begin{align*} 
CB(X) & = \{ A\in B(X) : A \text{ is convex} \} \\ 
\text{and } CK(X) & = \{ A \in K(X) : A \text{ is convex} \} . 
\end{align*}
When working in a normed space, we will always assume it to be equipped with the consistent convex bicombing defined by \eqref{eq: normed bicombing}. In fact, in many Banach spaces this is the unique consistent convex bicombing \cite{BM2019}. 
By assuming the linear bicombing of \eqref{eq: normed bicombing}, our definition of a convex set perfectly aligns with the vector space definition.

This leads us to our first result: If \(X\) admits a conical bicombing, then the convex hull of two sets is no further apart than the sets themselves.

\begin{lem}\label{lem: convex distance inequality}
If \(X\) admits a conical bicombing \(\sigma\), then for any \(A,B\subseteq X\), 
\[ d_H(\cco(A),\cco(B)) \leq d_H(A,B) . \] 
\end{lem}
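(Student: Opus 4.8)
The plan is to prove this in two stages: first show that $d_H(\co(A), \co(B)) \leq d_H(A,B)$ for the (not necessarily closed) convex hulls, then pass to the closure. The second stage should be routine, since in any metric space $d_H(\overline{S}, \overline{T}) = d_H(S,T)$, and $\cco(A)$ is the closure of $\co(A)$ — the key point being that the closure of a convex set is convex when the bicombing is conical (hence continuous), so $\overline{\co(A)} = \cco(A)$. I would record this closure fact as a small preliminary observation, or cite it if it appears earlier.

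For the first stage, set $r = d_H(A,B)$, so that $A \subseteq N_r(B)$ and $B \subseteq N_r(A)$. By symmetry it suffices to show $\co(A) \subseteq N_r(\co(B))$. The natural approach is to show that $N_r(\co(B))$ is itself a convex set containing $A$; since $\co(A)$ is the smallest convex set containing $A$, this gives the inclusion. Containment of $A$ is immediate: $A \subseteq N_r(B) \subseteq N_r(\co(B))$. So the crux is convexity of $N_r(\co(B))$. Take $x, y \in N_r(\co(B))$ and $t \in [0,1]$; I want $\sigma(x,y,t) \in N_r(\co(B))$. Choose points $p, q \in \co(B)$ with $d(x,p) \leq r + \epsilon$ and $d(y,q) \leq r + \epsilon$ for arbitrary $\epsilon > 0$. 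Since $\co(B)$ is convex, $\sigma(p,q,t) \in \co(B)$. Now apply the conical inequality with the quadruple $(x,y,p,q)$:
\[ d(\sigma(x,y,t), \sigma(p,q,t)) \leq (1-t) d(x,p) + t\, d(y,q) \leq (1-t)(r+\epsilon) + t(r+\epsilon) = r + \epsilon. \]
Hence $d(\sigma(x,y,t), \co(B)) \leq r + \epsilon$ for every $\epsilon > 0$, so $d(\sigma(x,y,t), \co(B)) \leq r$, i.e. $\sigma(x,y,t) \in N_r(\co(B))$. This proves $N_r(\co(B))$ is convex, completing the first stage.

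I do not anticipate a serious obstacle here; the conical inequality is exactly tailored to this kind of argument. The only mild subtlety is bookkeeping around whether to work with $\co$ or $\cco$ throughout: one could try to run the same argument directly with closed convex hulls and $N_r$ being a \emph{closed} neighborhood, which is cleaner in that $N_r(\cco(B))$ is already closed, but then one must separately confirm $N_r(\cco(B))$ is convex (same computation as above) and that it contains $\cco(A)$ (it contains $A$ and is closed and convex, hence contains $\cco(A)$). This direct route avoids the closure-commutes lemma entirely, so I would likely present it that way: fix $r > d_H(A,B)$, show $N_r(\cco(B))$ is a closed convex set containing $A$, deduce $\cco(A) \subseteq N_r(\cco(B))$, symmetrize, and then let $r \downarrow d_H(A,B)$.
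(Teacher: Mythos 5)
Your proposal is correct, and the ``direct route'' you settle on at the end --- showing that \(N_\varepsilon(\cco(B))\) is a closed convex set containing \(A\) via the conical inequality applied to the quadruple \((x,y,p,q)\), deducing \(\cco(A)\subseteq N_\varepsilon(\cco(B))\), symmetrizing, and taking the infimum --- is essentially identical to the paper's proof. The only cosmetic difference is that you first sketch a detour through the non-closed hull \(\co\) before discarding it, whereas the paper works with \(\cco\) and closed neighborhoods from the start.
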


\begin{proof}
Let \(A,B\subseteq X\) be given. If \(d_H(A,B)\) is infinite, then the statement is trivial, so suppose that \(d_H(A,B)\) is finite. 
Let \(\varepsilon>0\) be such that \(A\subseteq N_{\varepsilon}(B)\). 
Since \(B\subseteq \cco(B)\), we immediately have \(A\subseteq N_{\varepsilon}(\cco(B))\). It remains only to show that \(\cco(A)\subseteq N_\varepsilon(\cco(B))\). 

Fix \(x_1,x_2\in N_{\varepsilon}(\cco(B))\). For any arbitrary small \(\delta>0\), we can find \(b_1,b_2\in \cco(B)\) such that \(d(x_i,b_i) < d(x_i,\cco(B))+\delta\). 
Since \(\sigma\) is conical, for all \(t\in [0,1]\) we have 
\begin{align*} 
d\big(\sigma(x_1,x_2,t) , \sigma(b_1,b_2,t) \big) & \leq (1-t) d(x_1,b_1) + t d(x_2,b_2) 
\\ & < (1-t)d(x_1,\cco(B)) + td(x_2,\cco(B)) + \delta 
\\ & \leq \varepsilon + \delta . 
\end{align*}
Since \(\delta\) can be arbitrarily small, \(\sigma(x_1,x_2,t)\in N_\varepsilon(\cco(B))\) and so \(N_\varepsilon(\cco(B))\) is convex. Given that \(N_\varepsilon(\cco(B))\) is a closed convex set containing \(A\), we also have \(\cco(A) \subseteq N_\varepsilon(\cco(B))\). It now follows that 
\begin{align*} 
d_H(\cco(A),\cco(B)) & = \inf \big\{ \varepsilon >0 : \cco(A) \subseteq N_{\varepsilon}(\cco(B)) \ \mathrm{and} \ \cco(B) \subseteq N_{\varepsilon} (\cco(A)) \big\} 
\\ & \leq \inf \big\{ \varepsilon>0 : A \subseteq N_{\varepsilon}(B) \ \mathrm{and} \ B \subseteq N_\varepsilon(A)  \big\} 
\\ & = d_H(A,B) . \qedhere
\end{align*}
\end{proof}

\begin{rem} 
As a minor observation, the above lemma tells us that if \(X\) is equipped with a conical bicombing, then the map \(\cco : B(X) \to CB(X)\) is 1-Lipschitz. 
\end{rem}

We end this section with an equivalent formulation of the Hausdorff metric, which we will use throughout Sections \ref{sec: bicombings cb(x)} and \ref{sec: bicombing k(x)}. 

\begin{lem} 
The Hausdorff distance can be equivalently defined by 
\[ d_H(A,B) = \max \left\{ \sup_{a\in A} d(a,B) , \sup_{b\in B} d(b,A) \right\} . \] 
\end{lem}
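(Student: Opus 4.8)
The plan is to show that the set of $\varepsilon$'s appearing in the definition of $d_H(A,B)$ coincides with the set $\{\varepsilon>0 : D(A,B)\le\varepsilon\}$, where $D(A,B) = \max\{\sup_{a\in A} d(a,B),\ \sup_{b\in B} d(b,A)\}$ denotes the proposed formula. The key elementary observation is that, since $N_\varepsilon(\cdot)$ is by definition the \emph{closed} $\varepsilon$-neighborhood, we have $A\subseteq N_\varepsilon(B)$ if and only if $d(a,B)\le\varepsilon$ for every $a\in A$, i.e.\ if and only if $\sup_{a\in A} d(a,B)\le\varepsilon$. Applying this in both directions, the condition ``$A\subseteq N_\varepsilon(B)$ and $B\subseteq N_\varepsilon(A)$'' is equivalent to ``$D(A,B)\le\varepsilon$''.

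It then follows directly from the definition that $d_H(A,B) = \inf\{\varepsilon>0 : D(A,B)\le\varepsilon\}$, so the remaining step is to evaluate this infimum and check that it equals $D(A,B)$ in every case. If $D(A,B)$ is a positive real number, then $\{\varepsilon>0 : D(A,B)\le\varepsilon\} = [D(A,B),\infty)$, whose infimum is $D(A,B)$. If $D(A,B)=0$, the set is $(0,\infty)$, with infimum $0 = D(A,B)$. If $D(A,B)=\infty$ (which can happen when $A$ or $B$ is unbounded, or in the infinite-distance case), the set is empty and the infimum is $+\infty = D(A,B)$ under the usual convention $\inf\varnothing = +\infty$. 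In all cases $d_H(A,B)=D(A,B)$, as claimed.

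I do not anticipate any genuine obstacle here; this is a routine unwinding of definitions. The only points requiring a little care are bookkeeping with strict versus non-strict inequalities — the definition of $d_H$ quantifies over $\varepsilon>0$ while the neighborhoods $N_\varepsilon$ are closed — and treating the degenerate cases $D(A,B)\in\{0,\infty\}$ separately so that the infimum is computed correctly.
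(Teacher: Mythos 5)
Your proof is correct and follows essentially the same route as the paper's: both hinge on the observation that $A\subseteq N_\varepsilon(B)$ if and only if $\sup_{a\in A}d(a,B)\le\varepsilon$ (because the neighborhoods are closed), and then reduce the infimum to the claimed maximum. The only cosmetic difference is that you merge the two containment conditions into the single condition $D(A,B)\le\varepsilon$ before taking the infimum, whereas the paper takes the infimum of each condition separately and then the maximum; your explicit treatment of the degenerate cases $D(A,B)\in\{0,\infty\}$ is a welcome extra bit of care.
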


\begin{proof}
First, note that \(x\in N_\varepsilon (A)\) if and only if \(d(x,A) \leq \varepsilon\). Therefore, 
\[\inf \{ \varepsilon > 0 : B\subseteq N_\varepsilon (A) \} = \sup_{b\in B} d(b,A) . \] 
It now follows that 
\begin{align*} 
d_H(A,B) & = \inf\{ \varepsilon>0 : A\subseteq N_\varepsilon (B) \text{ and } B\subseteq N_\varepsilon(A)\} 
\\ & = \max \big\{ \inf \{ \varepsilon > 0 : A\subseteq N_\varepsilon (B) \} , \inf \{ \varepsilon > 0 : B\subseteq N_\varepsilon (A) \} \big\} 
\\ & = \max \Big\{ \sup_{a\in A} d(a,B) , \sup_{b\in B} d(b,A) \Big\} . \qedhere
\end{align*}
\end{proof}

\section{A Conical Bicombing on \(CB(X)\)}\label{sec: bicombings cb(x)} %%%%% BICOMBING CB(X) %%%%%

In order to motivate our technique for constructing bicombings on \(CB(X)\), we will begin with the assumption that \(X\) is a real normed space (with dual space \(X^*\)). 
In this case, the bicombing follows immediately from H\"ormander's embedding theorem. 
To present the theorem, we define the \emph{support functional}, \(s_A : X^* \to \R\), of a nonempty set \(A\) by 
\[ s_A(x^*) = \sup_{a\in A} x^*(a) . \]

\begin{thm}[H\"ormander]\label{thm: hmndr} 
Let \(X\) be a real normed space. The map \(A\mapsto s_A\) is an algebraic and isometric embedding of \(CB(X)\) as a convex cone in the Banach space of bounded continuous functions on the closed unit ball of \(X^*\). 
\end{thm}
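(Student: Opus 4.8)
The plan is to verify, in order: (i) the assignment $A\mapsto s_A$ maps $CB(X)$ into $C_b(B_{X^*})$, where $B_{X^*}$ denotes the closed unit ball of $X^*$; (ii) it respects Minkowski addition and positive scaling, so its image is a convex cone; (iii) it is injective; and (iv) it is isometric, i.e.\ $\lVert s_A - s_B\rVert_\infty = d_H(A,B)$. Steps (i) and the identities in (ii) are short manipulations of suprema, and the only substantial input for (iii) and (iv) is the Hahn--Banach separation theorem.

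For (i): if $A\in CB(X)$ then $\lvert s_A(x^*)\rvert \le \sup_{a\in A}\lVert a\rVert < \infty$ whenever $\lVert x^*\rVert\le 1$, so $s_A$ is bounded, and $\lvert s_A(x^*) - s_A(y^*)\rvert \le \sup_{a\in A}\lvert(x^*-y^*)(a)\rvert \le \lVert x^*-y^*\rVert\,\sup_{a\in A}\lVert a\rVert$, so $s_A$ is Lipschitz, hence continuous, on $B_{X^*}$. For (ii): unwinding the definition of the Minkowski sum gives $s_{A+B} = s_A + s_B$ (the double supremum over $a\in A$ and $b\in B$ splits) and $s_{\lambda A} = \lambda s_A$ for $\lambda\ge 0$. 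A support functional is unchanged when a set is replaced by its closed convex hull, so $s_A + s_B = s_{\overline{A+B}}$ with $\overline{A+B}\in CB(X)$; together with $\lambda s_A = s_{\lambda A}$, this shows the image is closed under positive linear combinations, i.e.\ it is a convex cone.

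Both (iii) and (iv) I would deduce from a single separation lemma: for a nonempty closed convex $B\subseteq X$ and any $a\in X$,
\[ d(a,B) \;=\; \sup_{\lVert x^*\rVert\le 1}\bigl( x^*(a) - s_B(x^*) \bigr). \]
The inequality $d(a,B)\ge$ RHS is immediate, since $x^*(a) - s_B(x^*) \le x^*(a) - x^*(b) \le \lVert a-b\rVert$ for every $b\in B$; the reverse is where Hahn--Banach enters: if $d(a,B) = r$, separate the open ball of radius $r$ about $a$ from $B$ by a norm-one functional $x^*$, which yields $x^*(a) - s_B(x^*) \ge r$. Granting this, injectivity is immediate — if $s_A = s_B$ then $d(a,B) = 0$ for every $a\in A$, so $A\subseteq B$ by closedness, and symmetrically. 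For the isometry, interchanging the two suprema gives $\sup_{a\in A} d(a,B) = \sup_{\lVert x^*\rVert\le 1}\bigl(s_A(x^*) - s_B(x^*)\bigr)$, and likewise with $A,B$ swapped; feeding these into the $\max$-description of $d_H$ from the previous section gives $d_H(A,B) = \sup_{\lVert x^*\rVert\le 1}\lvert s_A(x^*) - s_B(x^*)\rvert = \lVert s_A - s_B\rVert_\infty$ (all quantities finite, since $A$ and $B$ are nonempty and bounded).

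I expect the only genuine obstacle to be bookkeeping: in an infinite-dimensional $X$ the Minkowski sum of two members of $CB(X)$ need not be closed, which is exactly why the algebraic claims in (ii) are phrased through closed convex hulls — support functionals cannot see the difference between a set and its closure. Apart from that, the argument uses nothing deeper than one application of Hahn--Banach separation; the target is a Banach space by the standard fact that the bounded continuous real-valued functions on any topological space are complete in the supremum norm.
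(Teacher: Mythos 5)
The paper does not prove this theorem itself---it defers to Beer \cite{beer} and H\"ormander's original paper---and your argument is correct and is essentially the standard proof found there: everything rests on the Hahn--Banach duality formula \(d(a,B) = \sup_{\|x^*\|\leq 1}\big(x^*(a) - s_B(x^*)\big)\) for closed convex \(B\), from which injectivity and the isometry \(\|s_A - s_B\|_\infty = d_H(A,B)\) both follow by interchanging suprema and invoking the \(\max\)-formulation of \(d_H\). Your handling of the algebraic structure---noting that Minkowski sums need not be closed in infinite dimensions and that support functionals do not distinguish a set from its closed convex hull, so \(s_A + s_B = s_{\overline{A+B}}\)---is exactly the right bookkeeping.
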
 

For a proof of H\"ormander's theorem, see \cite[Theorem 3.2.9]{beer} or H\"ormander's original paper \cite{hormander}. 
Our interest lies in the following corollary. 

%In order to motivate our corollary, we will point out a few important facts used in the proof. 

%Let \(U^*\) be the closed unit ball of \(X^*\). The \emph{isometry} in H\"ormander's theorem comes from the fact \[ d_H(A,B) = \sup_{x^* \in U^*} | s_A(x^*) - s_B(x^*) | \] for any \(A,B\in CB(X)\). The \emph{algebraic} part of the theorem comes from \(\lambda s_A = s_{\lambda A}\) for \(\lambda \geq 0\) and \(s_{A} + s_{B} = s_{\overline{A+B}}\). Note that in the latter case, taking the closure of \(A+B\) is required as the sum of two closed bounded convex sets is not necessarily closed. 

\begin{cor}\label{cor: hrmndr} 
If \(X\) is a real normed space, then 
\[ \Sigma (A,B,t) = \overline{(1-t)A + tB} \] 
defines a consistent convex bicombing on \((CB(X), d_H)\). 
\end{cor}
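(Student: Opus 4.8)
The plan is to transport the linear bicombing of the Banach space $Y := C(B_{X^*})$ of bounded continuous functions on the closed unit ball of $X^*$ back to $CB(X)$ via H\"ormander's embedding $A \mapsto s_A$ (Theorem \ref{thm: hmndr}). Since that embedding is algebraic and isometric, and the image is a convex cone, everything hinges on showing that the linear convex combination $(1-t)s_A + t s_B$ inside $Y$ is exactly the support functional of $\overline{(1-t)A+tB}$; once that identity is in hand, the bicombing properties (endpoints, linear reparametrization, convexity, consistency) are inherited verbatim from the known consistent convex bicombing $\sigma(u,v,t) = (1-t)u + tv$ on the Banach space $Y$.

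So the first step is to record that $\Sigma(A,B,t)$ lands in $CB(X)$: the algebraic (Minkowski) sum of two bounded convex sets is bounded and convex, scalar multiples preserve this, and we take the closure, so $\overline{(1-t)A + tB} \in CB(X)$. The second and central step is the support-functional identity
\[ s_{\overline{(1-t)A+tB}} = (1-t)s_A + t s_B \]
for all $t\in[0,1]$. For this I would compute, for any $x^*$ in the unit ball of $X^*$,
\[ s_{(1-t)A + tB}(x^*) = \sup_{a\in A,\, b\in B} x^*\big((1-t)a + tb\big) = (1-t)\sup_{a\in A}x^*(a) + t\sup_{b\in B}x^*(b) = (1-t)s_A(x^*) + t s_B(x^*), \]
using that the supremum of a sum over a product set splits and that $1-t, t \geq 0$; and then observe that passing to the closure does not change the support functional, since $x^*$ is continuous. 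Thus under the embedding, $\Sigma(A,B,t)$ corresponds to the point $(1-t)s_A + t s_B = \sigma(s_A, s_B, t)$ of $Y$.

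The third step is bookkeeping. Because $A\mapsto s_A$ is an isometry from $(CB(X),d_H)$ onto a subset of $Y$, and because that subset is a convex cone (hence closed under the operation $u,v \mapsto (1-t)u+tv$), the map $\Sigma$ is well-defined with values in $CB(X)$ and satisfies: $\Sigma(A,B,0)=A$, $\Sigma(A,B,1)=B$ (the cases $t=0,1$ of the identity, noting $s_A$ determines $A$); the linear-reparametrization equality
\[ d_H\big(\Sigma(A,B,s),\Sigma(A,B,t)\big) = \| (s-t)(s_B - s_A) \|_\infty = |s-t|\, d_H(A,B); \]
convexity of $t\mapsto d_H(\Sigma(A,B,t),\Sigma(C,D,t)) = \|\sigma(s_A,s_B,t) - \sigma(s_C,s_D,t)\|_\infty$, which holds since the corresponding map for $\sigma$ on the normed space $Y$ is convex (indeed $t \mapsto \|(1-t)(s_A - s_C) + t(s_B - s_D)\|_\infty$ is convex by the triangle inequality and homogeneity); and consistency, which transfers because $\sigma$ on $Y$ is consistent and the embedding intertwines $\Sigma$ with $\sigma$. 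Each of these is a one-line consequence of the identity plus the corresponding fact for $\sigma$ on $Y$.

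I do not expect a serious obstacle here; the only place demanding care is the support-functional identity, specifically the interchange $\sup(f+g) = \sup f + \sup g$ over a product domain and the claim that closure is invisible to $s_A$ — both routine but worth stating cleanly, since the entire corollary rests on them. One should also not forget the trivial but necessary remark that a support functional uniquely determines an element of $CB(X)$ (injectivity of the embedding), which is what lets us conclude $\Sigma(A,B,0) = A$ rather than merely $s_{\Sigma(A,B,0)} = s_A$.
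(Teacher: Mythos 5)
Your proof is correct, and it is precisely the ``follows immediately from H\"ormander's theorem'' route that the paper advertises in prose --- but it is not the proof the paper actually writes down. The paper relegates the details of this corollary to Proposition \ref{prop: hrmndr2}, which avoids the embedding entirely: there one first checks that \(\overline{(1-t)A+tB}\) coincides with \(\cco\big(\bigcup_{a,b}\sigma(a,b,t)\big)\) for the linear bicombing \(\sigma\), invokes the general Theorem \ref{thm: conical} to get that \(\Sigma\) is a conical (hence geodesic) bicombing, verifies consistency by a direct Minkowski-sum computation \(\overline{(1-t)R+tS}=\overline{(1-((1-t)r+ts))A+((1-t)r+ts)B}\), and then uses the fact that a consistent conical bicombing is automatically convex. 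Your approach instead reduces everything to the single identity \(s_{\overline{(1-t)A+tB}}=(1-t)s_A+ts_B\) and transports the linear bicombing of the function space back through the isometric embedding of Theorem \ref{thm: hmndr}; all four bicombing properties then follow in one line each, with injectivity of \(A\mapsto s_A\) (Hahn--Banach separation) converting equalities of support functionals into equalities of sets. What the paper's route buys is uniformity and generality: the same argument covers complex normed spaces and convex subsets (where H\"ormander's theorem as stated does not directly apply) and reuses machinery already built for Theorem \ref{thm: conical}. What your route buys is economy and transparency for the real-normed-space case, making the ``consistent convex'' conclusion a literal pullback of the corresponding facts for the linear bicombing on a Banach space. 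The only points demanding care in your write-up --- the splitting of the supremum over the product set, the invisibility of the closure to the support functional, and the injectivity of the embedding --- are all handled correctly.
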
 

We relegate the details of the proof of Corollary \ref{cor: hrmndr} to that of Proposition \ref{prop: hrmndr2}, where we show (perhaps unsurprisingly) this is also true when \(X\) is a complex normed space. 
For now, recall our assumption that every normed space is equipped with the bicombing \(\sigma(x,y,t) = (1-t)x+ty\). 
This allows us to equivalently define the bicombing \(\Sigma\) in Corollary \ref{cor: hrmndr} as 
\[ \Sigma (A,B,t) = \overline{\bigcup_{a\in A} \bigcup_{b\in B} \sigma(a,b,t)} . \]
It turns out that this idea generalizes to spaces with a consistent convex bicombing, except we need to take the closed convex hull.

\begin{lem}\label{lem: geodesic} 
If \(X\) admits a consistent convex bicombing \(\sigma\), then for any \(A,B\in CB(X)\), the map 
\[ \gamma(t) = \cco \bigg( \bigcup_{a\in A} \bigcup_{b\in B} \sigma(a,b,t) \bigg) \] 
is a linearly reparametrized geodesic connecting \(A\) and \(B\) in \(CB(X)\). 
\end{lem}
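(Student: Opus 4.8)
The plan is to verify the three conditions that make $\gamma$ a linearly reparametrized geodesic from $A$ to $B$ in $CB(X)$: that $\gamma(0)=A$ and $\gamma(1)=B$, that each $\gamma(t)$ is a genuine element of $CB(X)$, and that $d_H(\gamma(s),\gamma(t)) = |t-s|\,d_H(A,B)$ for all $s,t\in[0,1]$. Throughout write $G_t = \bigcup_{a\in A}\bigcup_{b\in B}\sigma(a,b,t)$, so that $\gamma(t) = \cco(G_t)$, and recall that $\sigma$, being convex, is conical. The endpoint conditions are immediate: $G_0 = A$ and $G_1 = B$ since $A,B$ are nonempty, and as $A,B$ are closed and convex we have $\cco(A)=A$, $\cco(B)=B$. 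For membership in $CB(X)$ only boundedness needs comment: fixing $a_0\in A$, $b_0\in B$, the conical inequality gives $d(\sigma(a,b,t),\sigma(a_0,b_0,t))\le (1-t)d(a,a_0)+t\,d(b,b_0)$, which is bounded uniformly over $a\in A$, $b\in B$; thus $G_t$ lies in some closed ball, closed balls are convex under a conical bicombing, and hence $\gamma(t)=\cco(G_t)$ is bounded.

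The substance is the distance identity. By the usual argument — for $0\le s\le t\le 1$, $d_H(A,B)\le d_H(\gamma(0),\gamma(s))+d_H(\gamma(s),\gamma(t))+d_H(\gamma(t),\gamma(1))$, whose right-hand side will be bounded by $s\,d_H(A,B)+(t-s)\,d_H(A,B)+(1-t)\,d_H(A,B)=d_H(A,B)$ — it suffices to prove the single inequality $d_H(\gamma(s),\gamma(t))\le (t-s)\,d_H(A,B)$ for $s<t$, since then every term above is forced to equal its bound. Write $D=d_H(A,B)$. Because $N_\varepsilon(C)$ is closed and convex whenever $C$ is closed and convex (the convexity being exactly what was checked inside the proof of Lemma~\ref{lem: convex distance inequality}), it is enough to show $G_t\subseteq N_{(t-s)D}(\gamma(s))$ and $G_s\subseteq N_{(t-s)D}(\gamma(t))$; taking closed convex hulls then gives $\gamma(t)\subseteq N_{(t-s)D}(\gamma(s))$ and $\gamma(s)\subseteq N_{(t-s)D}(\gamma(t))$.

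For the first inclusion, fix $\sigma(a,b,t)\in G_t$ and $\delta>0$, and (using $d(b,A)\le D$) choose $\alpha\in A$ with $d(b,\alpha)<D+\delta$. Consistency gives $\sigma(a,b,t)=\sigma\big(\sigma(a,b,s),\,b,\,\lambda\big)$ with $\lambda=\tfrac{t-s}{1-s}$, while $P:=\sigma(\alpha,b,s)$ lies in $G_s\subseteq\gamma(s)$ and satisfies $d(b,P)=(1-s)d(\alpha,b)<(1-s)(D+\delta)$. Since $\gamma(s)$ is convex and contains both $\sigma(a,b,s)$ and $P$, the point $\sigma\big(\sigma(a,b,s),P,\lambda\big)$ lies in $\gamma(s)$, and the conical inequality bounds its distance from $\sigma(a,b,t)$ by $\lambda\,d(b,P)<(t-s)(D+\delta)$; letting $\delta\to0$ yields $d(\sigma(a,b,t),\gamma(s))\le(t-s)D$. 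The second inclusion is handled the same way from the other side: for $\sigma(a,b,s)\in G_s$ choose $\beta\in B$ with $d(a,\beta)<D+\delta$, use the consistency identity $\sigma(a,b,s)=\sigma\big(a,\sigma(a,b,t),s/t\big)$, note $R:=\sigma(a,\beta,t)\in G_t\subseteq\gamma(t)$ with $d(a,R)=t\,d(a,\beta)<t(D+\delta)$, and replace $a$ by $R$ inside the geodesic (legitimate since $\gamma(t)$ is convex and contains both $R$ and $\sigma(a,b,t)$) to obtain a point of $\gamma(t)$ within $(1-s/t)\,d(a,R)<(t-s)(D+\delta)$ of $\sigma(a,b,s)$.

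The crux, and the step I expect to cause the most trouble, is precisely the choice of witnessing point inside $\gamma(s)$ (and inside $\gamma(t)$): the naive candidate $\sigma(a,b,s)$ sits at distance $(t-s)\,d(a,b)$ from $\sigma(a,b,t)$, which is not controlled by $D$ when $A$ or $B$ is large, so one genuinely needs both consistency identities — to re-root the geodesic at $\sigma(a,b,s)$, respectively at $\sigma(a,b,t)$ — together with the convexity of $\gamma(s)$, $\gamma(t)$, which is what lets the far endpoint of the re-rooted geodesic be swapped for a nearby point of $A$ or $B$ without leaving the hull. Everything else is routine manipulation of the conical inequality and the elementary identity $d(\sigma(x,y,u),\sigma(x,y,v))=|u-v|\,d(x,y)$.
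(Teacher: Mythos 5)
Your proof is correct and follows essentially the same strategy as the paper's: reduce the geodesic identity to the one-sided inequality $d_H(\gamma(s),\gamma(t))\le(t-s)\,d_H(A,B)$, use consistency to re-root $\sigma(a,b,\cdot)$ at an intermediate point, use the conical inequality to swap the far endpoint for a nearby point of $A$ or $B$, and pass from the union $G_t$ to its closed convex hull via the convexity of $N_\varepsilon$ of a convex set (Lemma~\ref{lem: convex distance inequality}). If anything, you are more explicit than the paper, which handles only one of the two symmetric inclusions in detail and omits the endpoint/boundedness checks you spell out.
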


\begin{proof}
Let \(A,B\in CB(X)\) be given. We note that \(\gamma(t)\in CB(X)\) as it is convex by definition, and since \(\sigma\) is conical, the diameter of \(\gamma(t)\) is bounded by \(\diam(A) + \diam(B)\). 
To prove that \(\gamma\) is a linearly reparameterized geodesic, we will show that 
\[ d_H(\gamma(s),\gamma(t)) \leq |t-s| d_H(A,B) \]
whenever \(s,t\in [0,1]\). To see how this proves the lemma, note that taking \(s=0\) gives \(d_H(A,\gamma(t)) \leq td_H(A,B)\), and taking \(s=1\) gives \(d_H(\gamma(t),B) \leq (1-t) d_H(A,B)\). This gives us 
\begin{align*} 
d_H(A,\gamma(t)) & \leq t d_H(A,B) 
\\ & = d_H(A,B) - (1-t) d_H(A,B) 
\\ & \leq d_H(A,B) - d_H(\gamma(t),B) 
\\ & \leq d_H(A,\gamma(t)) 
\end{align*} 
so \(d_H(A,\gamma(t)) = t d_H(A,B)\). Similarly, \(d_H(\gamma(t),B) = (1-t) d_H(A,B)\). 
Then, assuming without loss of generality \(s<t\), we find 
\begin{align*} 
d_H(\gamma(s),\gamma(t)) & \leq |t-s| d_H(A,B) 
\\ & = td_H(A,B) - sd_H(A,B) 
\\ & = d_H(A,\gamma(t)) - d_H(A,\gamma(s)) 
\\ & \leq d_H(\gamma(t),\gamma(s)) 
\end{align*} 
so \(d_H(\gamma(s),\gamma(t)) = |t-s| d_H(A,B)\).

For simplicity of notation, \(Z_s\) will be the set \(\bigcup_{a\in A} \bigcup_{b\in B} \sigma(a,b,s)\), so that \(\gamma(s) = \cco(Z_s)\). Let \(s,t\in [0,1]\) be such that \(s<t\), and let \(z\in Z_s\) be given. 
We fix \(a\in A\) and \(b\in B\) such that \(z = \sigma(a,b,s)\), and define \(z_t = \sigma(a,b,t)\). 
Furthermore, fix \(b_\varepsilon \in B\) such that \(d(a,b_\varepsilon)< d(a,B) + \varepsilon\), and set \(z_t' = \sigma(a,b_\varepsilon,t)\). 
Letting \(\lambda = s/t\) and using the fact that \(\sigma\) is consistent, we have 
\[ z = \sigma(a,b,s) = \sigma(a,z_t,\lambda) . \] 
Then since \(\sigma\) is conical, we find 
\begin{align*} 
d(z,\cco(Z_t)) & \leq d(z , \sigma(z'_t , z_t , \lambda) ) 
\\ & = d( \sigma(a,z_t,\lambda) , \sigma(z'_t , z_t , \lambda) )
\\ & \leq (1-\lambda) d(a,z'_t) 
\\ & = t (1-\lambda) d(a,b_\varepsilon) 
\\ & < (t-s) d(a,B) + (t-s) \varepsilon 
\end{align*} 
Since this is true for any \(z\in Z_s\), we have \(\sup_{z\in Z_s} d(z,\cco(Z_t)) \leq (t-s) d_H(A,B)\). 
Applying the same arguments as in Lemma \ref{lem: convex distance inequality}, we see that 
\[ \sup \big\{ d(z,\cco(Z_t)) : z\in Z_s \big\} = \sup \big\{ d(z,\cco(Z_t)) : z\in \cco(Z_s) \big\} \] 
and so \(\sup_{z\in \cco(Z_s)} d(z,\cco(Z_t)) \leq (t-s) d_H(A,B)\). 
A symmetrical argument finds \(\sup_{z\in \cco(Z_t)} d(z,\cco(Z_s)) \leq (t-s) d_H(A,B)\). Thus, 
\[ d_H(\gamma(s),\gamma(t)) \leq |t-s| d_H(A,B) \] 
as desired. 
\end{proof}

\begin{thm}\label{thm: conical} 
If \(X\) admits a consistent convex bicombing \(\sigma\), then  
\[ \Sigma (A,B,t) = \cco \bigg( \bigcup_{a\in A} \bigcup_{b\in B} \sigma(a,b,t) \bigg) \] 
defines a conical bicombing on \(CB(X)\). 
\end{thm}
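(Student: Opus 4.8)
\emph{Proof strategy.} The plan is to check the two clauses of Definition \ref{defn: bicombing} in turn: that $\Sigma$ is a geodesic bicombing on $CB(X)$, and that it is conical. The geodesic-bicombing clause will be almost immediate from Lemma \ref{lem: geodesic}, so the real content lies in the conical inequality.

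For the first clause, I would note that each $\Sigma(A,B,t)$ lies in $CB(X)$: it is closed and convex by construction, and conicality of $\sigma$ bounds its diameter by $\diam(A)+\diam(B)$, hence it is bounded --- this is the opening observation of the proof of Lemma \ref{lem: geodesic}. Since $\bigcup_{a\in A}\bigcup_{b\in B}\sigma(a,b,0) = A$ and $\bigcup_{a\in A}\bigcup_{b\in B}\sigma(a,b,1) = B$, and $A,B$ are already closed and convex, the endpoint conditions $\Sigma(A,B,0) = \cco(A) = A$ and $\Sigma(A,B,1) = \cco(B) = B$ hold. Finally, $\Sigma(A,B,\cdot)$ is a linearly reparametrized geodesic --- that is, $d_H(\Sigma(A,B,s),\Sigma(A,B,t)) = |t-s|\,d_H(A,B)$ --- by Lemma \ref{lem: geodesic}.

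The substantive step is the conical inequality: for all $A,B,C,D\in CB(X)$ and $t\in[0,1]$,
\[ d_H\big(\Sigma(A,B,t),\Sigma(C,D,t)\big) \leq (1-t)\,d_H(A,C) + t\,d_H(B,D) . \]
Write $Z_t^{AB} = \bigcup_{a\in A}\bigcup_{b\in B}\sigma(a,b,t)$, so that $\Sigma(A,B,t) = \cco(Z_t^{AB})$, and similarly $Z_t^{CD}$. Using the $\max$-of-suprema description of $d_H$ and symmetry between the two pairs, it suffices to bound $\sup\{\, d(z,\Sigma(C,D,t)) : z \in \Sigma(A,B,t) \,\}$. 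I would first reduce the supremum over $\cco(Z_t^{AB})$ to one over $Z_t^{AB}$, exactly as in Lemma \ref{lem: convex distance inequality}: a closed $\varepsilon$-neighborhood of a closed convex set is itself closed and convex, so $Z_t^{AB}\subseteq N_\varepsilon(\Sigma(C,D,t))$ forces $\cco(Z_t^{AB})\subseteq N_\varepsilon(\Sigma(C,D,t))$, whence the two suprema coincide. Then, for $z = \sigma(a,b,t)$ with $a\in A$, $b\in B$, and any $\delta>0$, I pick $c\in C$ with $d(a,c) < d(a,C)+\delta$ and $e\in D$ with $d(b,e) < d(b,D)+\delta$; since $\sigma(c,e,t)\in Z_t^{CD}\subseteq\Sigma(C,D,t)$ and $\sigma$ is conical,
\[ d\big(z,\Sigma(C,D,t)\big) \leq d\big(\sigma(a,b,t),\sigma(c,e,t)\big) \leq (1-t)d(a,c) + t\,d(b,e) < (1-t)\,d_H(A,C) + t\,d_H(B,D) + \delta , \]
using $d(a,C)\leq d_H(A,C)$ and $d(b,D)\leq d_H(B,D)$. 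Letting $\delta\to 0^+$, taking the supremum over $z$, promoting it to $\cco(Z_t^{AB})$ by the reduction above, and combining with the symmetric estimate for $\sup\{\, d(w,\Sigma(A,B,t)) : w \in \Sigma(C,D,t) \,\}$ completes the proof.

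I do not anticipate a serious obstacle: the argument is a set-valued version of the pointwise conicality of $\sigma$. The main subtlety is the passage between $\cco(Z_t)$ and $Z_t$ when manipulating point-to-set suprema, which is handled by the neighborhood-convexity observation borrowed from Lemma \ref{lem: convex distance inequality}; beyond that, only the routine $\delta$-bookkeeping is needed, since the infima defining $d(a,C)$ and $d(b,D)$ need not be attained.
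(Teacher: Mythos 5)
Your proposal is correct and follows essentially the same route as the paper: Lemma \ref{lem: geodesic} for the geodesic clause, and then the pointwise conical estimate on representatives \(\sigma(a,b,t)\), \(\sigma(c,e,t)\) with approximate nearest points, followed by passage to the closed convex hulls. The only cosmetic difference is that you inline the neighborhood-convexity argument to pass from \(Z_t^{AB}\) to \(\cco(Z_t^{AB})\), whereas the paper first bounds \(d_H(U_t,V_t)\) on the raw unions and then invokes Lemma \ref{lem: convex distance inequality} as a black box.
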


\begin{proof} 
By Lemma \ref{lem: geodesic}, we know that \(\Sigma(A,B,\cdot)\) is a geodesic, so it remains only to show that this bicombing is conical. Let \(A,B,C,D\in CB(X)\) and \(t\in [0,1]\) be given, and define 
\[ U_t =  \bigcup_{a\in A} \bigcup_{b\in B} \sigma(a,b,t) \quad \text{and} \quad V_t =  \bigcup_{c\in C} \bigcup_{d\in D} \sigma(c,d,t) \] 
so that \(\Sigma (A,B,t) = \cco(U_t)\) and \(\Sigma(C,D,t) = \cco(V_t)\). If we can show that 
\[d_H(U_t,V_t) \leq (1- t) d_H(A,C) + t d_H(B,D) \] 
then Lemma \ref{lem: convex distance inequality} will complete the result. 

Let \(u\in U_t\) and \(\varepsilon>0\) be given. Fix \(a_u \in A\) and \(b_u\in B\) such that \(u = \sigma(a_u , b_u , t)\). Next, fix \(c_\varepsilon \in C\) and \(d_\varepsilon \in D\) such that 
\[ d(a_u, c_\varepsilon) < d(a_u , C) + \varepsilon \quad \text{and} \quad d(b_u , d_\varepsilon) < d(b_u , D) + \varepsilon . \] 
Finally, let \(v = \sigma(c_\varepsilon , d_\varepsilon , t ) \in V\). 
Using the fact that \(\sigma\) is conical, we find 
\begin{align*} 
d(u,V_t) & \leq d(u,v) 
\\ & \leq (1-t) d(a_u , c_\varepsilon) + t d(b_u , d_\varepsilon) 
\\ & < (1-t) d(a_u , C) + t d(b_u , D) + \varepsilon . 
\end{align*} 
Given that the above holds for any \(u\in U_t\) and \(\varepsilon >0\), we have 
\begin{align*} 
\sup_{u\in U_t} d(u,V_t) & \leq (1-t) \sup_{a\in A} d(a , C) + t \sup_{b\in B} d(b , D) 
\\ & \leq (1-t) d_H(A,C) + t d_H(B,D) . 
\end{align*} 
A symmetrical argument, starting with \(v\in V_t\), yields 
\[ \sup_{v\in V_t} d(v,U_t) \leq (1-t) d_H(A,C) + t d_H(B,D) \] 
and so \(d_H(U_t,V_t) \leq (1- t) d_H(A,C) + t d_H(B,D)\). 
\end{proof}

This paper does not consider any true applications of Theorem \ref{thm: conical}, but we observe the following immediate corollary.

\begin{cor}
If \(X\) admits a consistent convex bicombing, then \(CB(X)\) is contractible. 
\end{cor}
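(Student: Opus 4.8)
The plan is to turn the conical bicombing $\Sigma$ from Theorem \ref{thm: conical} into an explicit contraction. Fix any $P \in CB(X)$ — for instance a singleton $\{p\}$, which is trivially nonempty, closed, bounded, and convex — and define $H \colon CB(X) \times [0,1] \to CB(X)$ by
\[ H(A,t) = \Sigma(A,P,t). \]
Since $\Sigma$ is a bicombing we have $H(A,0) = \Sigma(A,P,0) = A$ and $H(A,1) = \Sigma(A,P,1) = P$ for every $A$, so $H$ is a homotopy from the identity of $CB(X)$ to the constant map at $P$; once we know $H$ is continuous, this exhibits $CB(X)$ as contractible.

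It remains to verify that $H$ is continuous on $CB(X) \times [0,1]$, and for this I would simply combine two estimates already established. Lemma \ref{lem: geodesic} tells us $\Sigma(A,P,\cdot)$ is a linearly reparametrized geodesic, so $d_H\big(\Sigma(A,P,s),\Sigma(A,P,t)\big) = |t-s|\, d_H(A,P)$; and the conical inequality of Theorem \ref{thm: conical} gives
\[ d_H\big(\Sigma(A,P,t),\Sigma(A',P,t)\big) \le (1-t)\, d_H(A,A') + t\, d_H(P,P) \le d_H(A,A'). \]
A triangle inequality in $(CB(X),d_H)$ then yields
\[ d_H\big(H(A,s),H(A',t)\big) \le |t-s|\, d_H(A,P) + d_H(A,A'). \]
Because $d_H(A,P) \le d_H(A,A') + d_H(A',P)$ stays bounded as $A$ ranges over a neighborhood of a fixed $A'$, the right-hand side tends to $0$ as $(A,s) \to (A',t)$, so $H$ is jointly continuous.

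I do not expect a genuine obstacle here; the statement is essentially an immediate consequence of Theorem \ref{thm: conical}. The only point demanding a little care is establishing \emph{joint} continuity of $H$ in the pair $(A,t)$ rather than mere separate continuity in each argument, which is precisely what the last displayed inequality supplies.
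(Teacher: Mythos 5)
Your proof is correct and follows the same route as the paper: fix a basepoint $P$ and contract along $t\mapsto \Sigma(A,P,t)$. The paper leaves the continuity of this homotopy as "one can check," and your verification of joint continuity via the geodesic identity plus the conical inequality is exactly the right way to fill that in.
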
 

\begin{proof} 
Fix any basepoint \(P\in CB(X)\). Since \(CB(X)\) admits a conical bicombing, which we will call \(\Sigma\), one can check that the map \(\phi: CB(X) \times [0,1] \to CB(X)\) defined by 
\[ \phi(A,t) = \Sigma(A,P,t) \] 
is a deformation retraction of \(CB(X)\) to the point \(P\). 
\end{proof}

\section{Special Cases of Theorem \ref{thm: conical}}\label{sec: cases} %%% TWO SPECIAL CASES %%%

While Theorem \ref{thm: conical} shows that passing to \(CB(X)\) allows us to construct a bicombing which is at least conical, it leaves the question whether or not this bicombing is consistent, or even convex. Part of the difficulty comes from the still somewhat unpredictable nature of convex hulls, even in \(\CAT(0)\) spaces. However, we will show in two special cases -- convex sets of linear spaces and tree graphs -- consistency does hold. 

Before verifying these special cases, we recall one more condition on bicombings, which is easily seen to be preserved in the cases we consider. 

\begin{defn} 
Let \(\sigma\) be a geodesic bicombing on a metric space \(X\). We say that \(\sigma\) is \emph{reversible} if \(\sigma(x,y,t) = \sigma(y,x,1-t)\) for all \(x,y\in X\) and \(t\in [0,1]\). 
\end{defn}

The following corollary is immediate. 

\begin{cor}\label{cor: reversible} 
Assuming the conditions of Theorem \ref{thm: conical}, if \(\sigma\) is additionally reversible, then so is the bicombing \(\Sigma\). 
\end{cor}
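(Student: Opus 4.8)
The plan is to verify the identity $\Sigma(A,B,t) = \Sigma(B,A,1-t)$ directly from the explicit formula for $\Sigma$ given in Theorem \ref{thm: conical}. Fix $A,B\in CB(X)$ and $t\in [0,1]$. Starting from
\[ \Sigma(A,B,t) = \cco\bigg( \bigcup_{a\in A}\bigcup_{b\in B} \sigma(a,b,t) \bigg), \]
I would apply the reversibility of $\sigma$ termwise, replacing each point $\sigma(a,b,t)$ by the equal point $\sigma(b,a,1-t)$; since this is an equality of points, the indexed union over $(a,b)\in A\times B$ is unchanged as a subset of $X$. Then I would reindex the double union, using the purely formal identity $\bigcup_{a\in A}\bigcup_{b\in B} = \bigcup_{b\in B}\bigcup_{a\in A}$, to obtain
\[ \cco\bigg( \bigcup_{b\in B}\bigcup_{a\in A} \sigma(b,a,1-t) \bigg) = \Sigma(B,A,1-t), \]
which is exactly the assertion that $\Sigma$ is reversible.

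I do not expect any genuine obstacle: the corollary is a bookkeeping consequence of the pointwise reversibility of $\sigma$, the commutativity of set unions, and the fact that $\cco$ is applied to literally the same set on both sides, so it contributes nothing beyond being a well-defined operation. The membership $\Sigma(A,B,t), \Sigma(B,A,1-t)\in CB(X)$ has already been established in the proof of Theorem \ref{thm: conical} (via Lemma \ref{lem: geodesic}), so no additional argument is required there. The write-up should therefore be only a couple of lines.
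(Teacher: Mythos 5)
Your argument is correct and is exactly the ``immediate'' verification the paper has in mind (the paper gives no written proof, declaring the corollary immediate): pointwise reversibility of \(\sigma\) turns the indexed union for \(\Sigma(A,B,t)\) into literally the same subset as that for \(\Sigma(B,A,1-t)\), and \(\cco\) of equal sets is equal. Nothing further is needed.
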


We now consider two particular cases where the bicombing given in Theorem \ref{thm: conical} is consistent (and also reversible).

\begin{prop}\label{prop: hrmndr2} 
If \(X\) is a convex subset of a (real or complex) normed space, then 
\[ \Sigma (A,B,t) = \overline{(1-t)A + tB} \] 
defines a reversible consistent convex bicombing on \((CB(X), d_H)\). 
\end{prop}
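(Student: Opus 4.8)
The plan is to deduce the proposition from Theorem~\ref{thm: conical} together with a few elementary facts about Minkowski sums of convex sets, so that essentially only \emph{consistency} requires real work. First I would observe that when the ambient normed space carries its standard bicombing $\sigma(x,y,t)=(1-t)x+ty$ and $A,B\in CB(X)$, the set $(1-t)A+tB$ is already nonempty, bounded, and convex (a Minkowski sum of convex sets is convex, and a nonnegative scalar multiple of a convex set is convex), and it lies in $X$ since $X$ is convex. Hence its closed convex hull is just its closure, and
\[ \cco\Big(\bigcup_{a\in A}\bigcup_{b\in B}\sigma(a,b,t)\Big)=\cco\big((1-t)A+tB\big)=\overline{(1-t)A+tB}=\Sigma(A,B,t). \]
So $\Sigma$ is exactly the bicombing furnished by Theorem~\ref{thm: conical}; in particular it is a well-defined conical bicombing on $CB(X)$, which en route also establishes Corollary~\ref{cor: hrmndr}. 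Reversibility is then immediate, either from Corollary~\ref{cor: reversible} applied to the reversible $\sigma$, or directly from commutativity of Minkowski addition: $\Sigma(A,B,t)=\overline{(1-t)A+tB}=\overline{tB+(1-t)A}=\Sigma(B,A,1-t)$.

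The substantive step is consistency, and for it I would first isolate two routine lemmas. The first is that $\overline{(1-t)\overline{C}+t\overline{D}}=\overline{(1-t)C+tD}$ for nonempty convex $C,D\subseteq X$ and $t\in[0,1]$; the inclusion ``$\subseteq$'' holds because any point $(1-t)\bar c+t\bar d$ is a limit of points of $(1-t)C+tD$, and ``$\supseteq$'' is trivial. The second is that $(\lambda+\mu)A=\lambda A+\mu A$ for nonempty convex $A$ and $\lambda,\mu\ge 0$, where the only nontrivial inclusion uses convexity of $A$. Granting these, fix $r<s$ in $[0,1]$ and $t\in[0,1]$, and put $C=(1-r)A+rB$, $D=(1-s)A+sB$, both convex. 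The first lemma gives $\Sigma(\Sigma(A,B,r),\Sigma(A,B,s),t)=\overline{(1-t)C+tD}$, and expanding and regrouping the $A$- and $B$-parts via the second lemma yields
\[ (1-t)C+tD=\big[(1-t)(1-r)+t(1-s)\big]A+\big[(1-t)r+ts\big]B. \]
The identity $(1-t)(1-r)+t(1-s)=1-\big((1-t)r+ts\big)$ then shows, with $u:=(1-t)r+ts$, that this equals $(1-u)A+uB$, so the whole expression is $\overline{(1-u)A+uB}=\Sigma(A,B,(1-t)r+ts)$, which is precisely the consistency identity. Convexity of $\Sigma$ is then free, since (as noted after Definition~\ref{defn: bicombing}) every consistent conical bicombing is convex.

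I do not expect a genuine obstacle: the content is the coefficient bookkeeping above. The only care needed is to keep closures straight (handled by the first lemma), to note that $\Sigma$ lands back in $CB(X)$ because convexity of $X$ supplies $(1-t)a+tb\in X$, and to treat the degenerate cases in which one of the coefficients $(1-t)(1-r),(1-t)r,t(1-s),ts$ vanishes, where the second lemma is either vacuous or applied in a one-term form. The complex case needs no separate argument, since convexity and $\sigma$ involve only the real parameter $t\in[0,1]$, so a complex normed space is treated exactly like a real one.
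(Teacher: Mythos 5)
Your proposal is correct and follows essentially the same route as the paper: identify \(\overline{(1-t)A+tB}\) with the bicombing of Theorem~\ref{thm: conical} to get conicality and reversibility for free, then verify consistency by expanding \((1-t)R+tS\) and regrouping coefficients. The only difference is that you explicitly isolate the two Minkowski-sum facts (absorbing inner closures, and \((\lambda+\mu)A=\lambda A+\mu A\) for convex \(A\)) that the paper's chain of equalities uses implicitly, which is a welcome bit of extra care rather than a new idea.
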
 

\begin{proof} 
Taking \(\sigma\) to be the standard linear bicombing on \(X\) defined in (\ref{eq: normed bicombing}), and recalling that \((1-t)A+tB\) is convex whenever \(A,B\in CB(X)\), it is easy to check that 
\[ \Sigma (A,B,t) = \overline{(1-t)A + tB} = \cco \bigg( \bigcup_{a\in A} \bigcup_{b\in B} \sigma(a,b,t) \bigg) . \]
By Theorem \ref{thm: conical} and Corollary \ref{cor: reversible}, \(\Sigma\) is a reversible conical bicombing on \(CB(X)\). Noting that every consistent conical bicombing is convex, it remains only to show that this bicombing is consistent. 

Let \(A,B\in X\) and \(r,s,t\in [0,1]\) be given such that \(r<s\). Setting \(R = \Sigma(A,B,r)\) and \(S = \Sigma(A,B,s)\), we see that 
\begin{align*} 
\Sigma (R,S,t) & = \overline{(1-t)R + tS} 
\\ & = \overline{(1-t)\left(\overline{(1-r)A+rB}\right) + t\left(\overline{(1-s)A+sB}\right)} 
\\ & = \overline{\overline{(1-t)(1-r)A+(1-t)rB} + \overline{t(1-s)A+tsB}} 
\\ & = \overline{(1-((1-t)r + ts))A + ((1-t)r + ts)B} 
\\ & = \Sigma(A,B,(1-t)r + ts) 
\end{align*} 
so \(\Sigma\) is consistent. 
\end{proof}

Our second example is for \(\R\)-trees, which generalize metric tree graphs. See also \cite[Section II.1]{MS1984} or \cite[Example II.1.15]{bridson}. 

\begin{defn} 
An \emph{\(\R\)-tree} is a metric space \(T\) such that there is a unique geodesic joining any pair of points in \(T\) and -- letting \([x,y]\) denote the image of the geodesic starting at \(x\) and ending at \(y\) -- if \([x,y] \cap [y,z] = \{y\}\), then \([x,y]\cup[y,z] = [x,z]\). 
\end{defn}

\begin{prop} 
If \(T\) is an \(\R\)-tree, then the bicombing \(\Sigma\) given in Theorem \ref{thm: conical} is a reversible consistent convex bicombing on \(CB(T)\). 
\end{prop}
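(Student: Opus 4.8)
The plan is to reduce everything to the single assertion that $\Sigma$ is consistent. An $\R$-tree is uniquely geodesic and $\CAT(0)$, so the distance between two linearly reparametrized geodesics is a convex function of the common parameter; hence its unique geodesic bicombing $\sigma$ is consistent, convex, and reversible, and Theorem~\ref{thm: conical} together with Corollary~\ref{cor: reversible} already gives that $\Sigma$ is a reversible conical bicombing on $CB(T)$. Since every consistent conical bicombing is convex, it remains only to prove that $\Sigma$ is consistent. Unlike the normed case of Proposition~\ref{prop: hrmndr2}, there is no Minkowski-sum identity available, so the argument has to be geometric.

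First I would record the features of $\R$-trees that are used: a subset of $T$ is convex if and only if it is connected; $\co(A) = \bigcup_{a,a'\in A}[a,a']$; $\cco(A) = \overline{\co(A)}$ (conical bicombings are continuous, so the closure of a convex set is convex); and for any point $x$ and any closed convex $C\subseteq T$, every geodesic from $x$ into $C$ first meets $C$ in a common ``gate'' point, realized exactly when $T$ is complete and up to an $\varepsilon$ otherwise, exactly as in the proofs of Lemmas~\ref{lem: convex distance inequality} and~\ref{lem: geodesic}. Now fix $A,B\in CB(T)$, scalars $r<s$ and $t$ in $[0,1]$, set $u = (1-t)r + ts$, and write $R = \Sigma(A,B,r)$, $S = \Sigma(A,B,s)$, and $Z_\tau = \bigcup_{a\in A}\bigcup_{b\in B}\sigma(a,b,\tau)$.

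Of the two inclusions in $\Sigma(R,S,t) = \Sigma(A,B,u)$, one is easy and needs no tree structure: for every $a\in A$, $b\in B$ consistency of $\sigma$ on $T$ gives $\sigma(a,b,u) = \sigma\big(\sigma(a,b,r),\sigma(a,b,s),t\big)$ with $\sigma(a,b,r)\in Z_r\subseteq R$ and $\sigma(a,b,s)\in Z_s\subseteq S$, so $Z_u\subseteq\bigcup_{p\in R,\,q\in S}\sigma(p,q,t)\subseteq\Sigma(R,S,t)$, and taking closed convex hulls yields $\Sigma(A,B,u)\subseteq\Sigma(R,S,t)$. For the reverse inclusion $\Sigma(R,S,t)\subseteq\cco(Z_u)$ it is enough, since $\cco(Z_u)$ is closed and convex, $\sigma$ is continuous, and $R = \overline{\co(Z_r)}$, $S = \overline{\co(Z_s)}$, to show that $\sigma(p,q,t)\in\cco(Z_u)$ whenever $p\in Z_r$ and $q\in Z_s$. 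Writing $p = \sigma(a_1,b_1,r)$ and $q = \sigma(a_2,b_2,s)$, I would work inside the finite subtree spanned by $a_1,a_2,b_1,b_2$ and the medians of their triples, trace the geodesic $[p,q]$ leg by leg, and use that convexity of $A$ and of $B$ pushes the medians of triples drawn from $\{a_1,a_2\}$ and from $\{b_1,b_2\}$ back into $A$ and $B$; combined with consistency of $\sigma$ on $T$, this should exhibit $\sigma(p,q,t)$ as $\sigma(a^\ast,b^\ast,u)$ for suitable $a^\ast\in\co(A)$, $b^\ast\in\co(B)$, or at least on a geodesic joining two points of $Z_u$ — hence inside $\cco(Z_u)$. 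Passing from $\co(Z_r),\co(Z_s)$ back up to $R,S$ is then routine, as in Lemma~\ref{lem: geodesic}.

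The main obstacle is this reverse inclusion. Because $Z_r$ and $Z_s$ need not themselves be convex, a geodesic $[p,q]$ with $p\in Z_r$, $q\in Z_s$ can route through a branch point of $T$ in a way that is not transparently controlled by $Z_u$; the real danger is that such a geodesic overshoots, straying just past $\Sigma(A,B,u)$ along one leg of the tree. Ruling this out is precisely where the convexity of $A$ and $B$ and the fine structure of the $\R$-tree have to be used, and I expect this bookkeeping — rather than anything in the reductions — to be the technical heart of the proof.
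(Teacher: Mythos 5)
Your reductions are correct and agree with the paper's: CAT(0) gives a unique reversible consistent convex $\sigma$ on $T$, Theorem \ref{thm: conical} and Corollary \ref{cor: reversible} give a reversible conical $\Sigma$, and ``consistent conical implies convex'' leaves only consistency to check. Your ``easy inclusion'' $\Sigma(A,B,(1-t)r+ts)\subseteq\Sigma(R,S,t)$ is also fine. But the proposal stops short of a proof: the reverse inclusion, which you yourself flag as ``the technical heart,'' is only described in the conditional (``I would work inside the finite subtree\dots this should exhibit\dots I expect this bookkeeping\dots''). No median/leg-by-leg argument is actually carried out, so as written the central claim of the proposition is not established. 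That is a genuine gap, not a stylistic one.

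It is worth noting how the paper closes (or at least compresses) this step, because you record the relevant fact without deploying it. In an $\R$-tree, a subset is convex if and only if it is connected; since $A$ and $B$ are connected and $\sigma$ is continuous, the set $Z_t=\bigcup_{a\in A}\bigcup_{b\in B}\sigma(a,b,t)$ is already connected, hence convex, so the convex hull in the definition of $\Sigma$ is superfluous and $\Sigma(A,B,t)=\overline{Z_t}$. This is the structural observation your sketch circles around: it removes the hulls from $R$ and $S$ entirely (you still write $R=\overline{\co(Z_r)}$) and reduces consistency of $\Sigma$ to a statement about the sets $\overline{Z_\tau}$ alone, which the paper then attributes directly to consistency of $\sigma$. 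Even granting that the paper is itself terse about the remaining cross-term verification (that $\sigma(p,q,t)\in\overline{Z_u}$ when $p=\sigma(a_1,b_1,r)$ and $q=\sigma(a_2,b_2,s)$ with $(a_1,b_1)\neq(a_2,b_2)$), your write-up needs at minimum this convexity-equals-connectedness step made explicit for $Z_t$, followed by an actual argument for the cross terms, before it can count as a proof.
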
 

\begin{proof} 
Since every \(\R\)-tree is a \(\CAT(0)\) space \cite[Example II.1.15]{bridson}, \(T\) admits a unique reversible consistent convex bicombing \(\sigma\). Theorem \ref{thm: conical} and Corollary \ref{cor: reversible} immediately show that \(\Sigma\) is a reversible conical bicombing on \(CB(T)\), so it remains only to verify that \(\Sigma\) is consistent. 

First, we note that any subset of a tree graph is convex if and only if it is connected (a set which is connected but not convex contradicts the property that \(T\) is a tree). Furthermore, the continuity of \(\sigma\) implies that 
\[ \bigcup_{a\in A} \bigcup_{b\in B} \sigma(a,b,t) \] 
is connected whenever \(A\) and \(B\) are. Therefore, taking the convex hull in the definition of \(\Sigma\) is unnecessary, and we have 
\[ \Sigma(A,B,t) = \overline{\bigcup_{a\in A} \bigcup_{b\in B} \sigma(a,b,t)} \] 
for any \(A,B\in CB(T)\). The fact that \(\Sigma\) is consistent now follows immediately from our hypothesis that \(\sigma\) is consistent. 
\end{proof}

%To see this, suppose that we can find \(a,a'\in A\) and \(t\in (0,1)\) such that \(\sigma(a,a',t) \notin A\). However, since \(A\) is a connected, it is path-connected, so there is some other path from \(a\) to \(a'\) which stays in \(A\). Concatenating this path in \(A\) with \(\sigma(a,a',\cdot)\) gives a loop, which impossible since \(T\) is a tree. 

To end this section, we make a final observation concerning the geometry of \(CB(X)\). One might wonder if \(CB(X)\) could display nonpositive curvature in the sense of a \(\CAT(0)\) or Busemann space. However, as the following example shows, \(CB(X)\) is not necessarily uniquely geodesic, even if \(X\) is.

\begin{ex} 
Consider \(CK(\R)\) and let \(A=[-1,1]\), \(B =[-2,3]\), \(U=[-1,2]\), and \(V = [-2,2]\). Then \(d_H(A,B) = 2\) and 
\[ d_H(A,U) = d_H(A,V) = d_H(B,U) = d_H(B,V) = 1 \] 
so there are at least two shortest paths connecting \(A\) and \(B\). 
This example can be adapted to \(CK(\R^n)\) by defining \(A = [-1,1]\times [0,1]^{n-1}\), \(B = [-2,3] \times [0,1]^{n-1}\), etc. 
\end{ex}

On the other hand, it should be pointed out that introducing a different metric besides the Hausdorff metric can potentially improve the geometry. For example, \cite{ibragimov} achieves some success by establishing a metric on the nonsingleton elements of \(CB(X)\), which is Gromov hyperbolic and views the base space \(X\) as the boundary at infinity.

\section{Hyperspaces of Proper Geodesic Spaces}\label{sec: bicombing k(x)} %%%%%

We recall the main theorem of \cite{sosov}. 

\begin{thm}[Sosov, \cite{sosov}] 
The metric space \((B(X),d_H)\) has an intrinsic metric if and only if \((X,d)\) has an intrinsic metric. 
\end{thm}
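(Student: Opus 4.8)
The plan is to reduce both implications to the standard characterization of intrinsic metrics by approximate midpoints: a metric space $(Y,\rho)$ has an intrinsic metric if and only if for all $y_1,y_2\in Y$ and all $\varepsilon>0$ there is a point $z\in Y$ with $\rho(y_1,z)\le\tfrac12\rho(y_1,y_2)+\varepsilon$ and $\rho(z,y_2)\le\tfrac12\rho(y_1,y_2)+\varepsilon$ (equivalently, $y_1$ and $y_2$ can be joined by an $\varepsilon$-chain of total length at most $\rho(y_1,y_2)+\varepsilon$); see, e.g., \cite{bridson}. Both directions then become matters of transferring such approximate midpoints between $X$ and $B(X)$. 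The two directions are not symmetric: passing from $B(X)$ down to $X$ is almost immediate, while passing from $X$ up to $B(X)$ requires building a midpoint set.

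For the implication that $X$ intrinsic forces $B(X)$ intrinsic, fix $A,B\in B(X)$ and $\varepsilon>0$, and put $D=d_H(A,B)$. Since $d(a,B)\le D$ for every $a\in A$, I would choose for each $a\in A$ a point $b_a\in B$ with $d(a,b_a)\le D+\varepsilon$, and let $m_a$ be an $\varepsilon$-midpoint of $a$ and $b_a$ in $X$; symmetrically, for each $b\in B$ choose $a_b\in A$ with $d(a_b,b)\le D+\varepsilon$ and an $\varepsilon$-midpoint $m_b$ of $a_b$ and $b$. Then set $Z=\overline{\{m_a:a\in A\}\cup\{m_b:b\in B\}}$. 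One checks that $Z\in B(X)$: it is nonempty and closed by construction, and bounded because every $m_a$ lies within roughly $D/2$ of $a\in A$ (and every $m_b$ within roughly $D/2$ of $a_b\in A$), while $A$ is bounded. Using the formula $d_H(U,V)=\max\{\sup_{u\in U}d(u,V),\sup_{v\in V}d(v,U)\}$ recalled in Section~\ref{sec: hyperspaces}, I would then estimate $d_H(A,Z)\le D/2+\tfrac32\varepsilon$: every point of $Z$ is within $D/2+\tfrac32\varepsilon$ of $A$ via its defining midpoint, and every $a\in A$ is within $D/2+\tfrac32\varepsilon$ of $m_a\in Z$, and replacing $\{m_a\}\cup\{m_b\}$ by its closure affects neither supremum (the map $z\mapsto d(z,A)$ is continuous, and the distance from $a$ to a set equals the distance to its closure). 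A symmetric estimate gives $d_H(Z,B)\le D/2+\tfrac32\varepsilon$, so $Z$ is an approximate midpoint of $A$ and $B$ in $B(X)$; as $\varepsilon$ is arbitrary, $B(X)$ is intrinsic. This $Z$ is essentially the $t=\tfrac12$ slice of the union-of-geodesics set from Lemma~\ref{lem: geodesic}, and the estimate follows the pattern of Theorem~\ref{thm: conical}.

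For the converse, suppose $B(X)$ is intrinsic, fix $x,y\in X$ and $\varepsilon>0$, and apply the midpoint property in $B(X)$ to the singletons $\{x\}$ and $\{y\}$, noting that $d_H(\{x\},\{y\})=d(x,y)$. This produces $Z\in B(X)$ with $d_H(\{x\},Z)\le\tfrac12 d(x,y)+\varepsilon$ and $d_H(Z,\{y\})\le\tfrac12 d(x,y)+\varepsilon$. The decisive point is that $\{x\}$ is a single point, so $d_H(\{x\},Z)\le r$ already forces $\sup_{z\in Z}d(x,z)\le r$; hence \emph{every} $z\in Z$ satisfies $d(x,z)\le\tfrac12 d(x,y)+\varepsilon$, and likewise $d(z,y)\le\tfrac12 d(x,y)+\varepsilon$. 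Since $Z$ is nonempty, any $z\in Z$ is an $\varepsilon$-midpoint of $x$ and $y$ in $X$, so $X$ is intrinsic.

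I do not expect a serious obstacle once the midpoint characterization is in hand: the ``down'' direction is a one-line consequence of the singleton observation, and the ``up'' direction is a routine construction. The part needing care is the bookkeeping in the ``up'' direction — one must work with \emph{approximate} nearest points rather than genuine ones (a closed bounded set need not contain a nearest point), keep track of the accumulated errors, and confirm both that the midpoint set is bounded and that passing to its closure is harmless. Trying instead to argue directly with curves would make the ``up'' direction genuinely awkward, since a short curve $\Gamma$ in $B(X)$ need not admit a continuous, controlled-length selection $t\mapsto\gamma(t)\in\Gamma(t)$; routing through approximate midpoints (equivalently, short $\varepsilon$-chains) avoids this issue.
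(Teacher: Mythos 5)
The paper offers no proof of this statement---it is quoted verbatim from Sosov's paper \cite{sosov}---so there is no internal argument to compare against. Your strategy (reduce everything to approximate midpoints, build a midpoint \emph{set} $Z$ for the upward direction, and use singletons for the downward direction) is the natural one and, as far as the construction of $Z$ goes, closely parallels the midpoint-set machinery the paper itself uses in Lemma \ref{lem: geodesic} and Theorem \ref{thm: conical}. The two transfer arguments are correct as computations: the estimate $d_H(A,Z)\le \tfrac12 d_H(A,B)+\tfrac32\varepsilon$ checks out with your choice of approximate nearest points, $Z$ does lie in $B(X)$, and the observation that $d_H(\{x\},Z)=\sup_{z\in Z}d(x,z)$ makes the downward direction immediate.

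The one genuine gap is in your opening sentence. The equivalence ``$(Y,\rho)$ is intrinsic $\iff$ approximate midpoints exist'' is \emph{not} valid for arbitrary metric spaces if ``intrinsic'' means that $\rho(y_1,y_2)$ equals the infimum of lengths of curves joining $y_1$ to $y_2$ (the notion in \cite{bridson}): the forward implication holds, but the converse requires completeness. For instance, $\mathbb{Q}$ with the usual metric has exact midpoints yet contains no nonconstant curves at all, so it is not a length space. As written, both halves of your argument therefore establish only ``$X$ has approximate midpoints $\iff$ $B(X)$ has approximate midpoints.'' This \emph{is} the theorem if one adopts the $\varepsilon$-chain/midpoint definition of an intrinsic metric (equivalent to yours, and essentially what Sosov works with), but under the curve-length definition you must add completeness. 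The fix is cheap: either state the chain-based definition explicitly and use it throughout, or note that $X$ is complete if and only if $B(X)$ is (the singletons form a closed, isometric copy of $X$ in $B(X)$, and completeness passes from $X$ to $B(X)$ by a standard argument), restrict to that case, and then invoke the complete-space midpoint criterion. For the only use the paper makes of the theorem---Corollary \ref{cor: hausdorff geodesic}, where $X$ is proper and hence complete---this costs nothing.
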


By virtue of the generalized Hopf--Rinow theorem\footnote{Every complete and locally compact space with intrinsic metric is a proper geodesic space.} and the hyperspace inheritance properties of \cite{michael}, we have the following corollary. 

\begin{cor}\label{cor: hausdorff geodesic}
Let \((X,d)\) be a proper metric space. The space \((K(X) , d_H)\) is a geodesic space if and only if \((X,d)\) is a geodesic space. 
\end{cor}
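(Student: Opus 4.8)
The plan is to reduce the statement to Sosov's theorem by first noting that properness collapses the two hyperspaces. Indeed, since $X$ is proper, every closed bounded subset of $X$ is a closed subset of some closed ball, hence compact; therefore $B(X) = K(X)$, and they carry the same metric $d_H$. So it suffices to show that $(B(X), d_H)$ is geodesic if and only if $(X,d)$ is geodesic, under the standing hypothesis that $X$ is proper. After that, each implication is a formal chaining of two facts: ``geodesic $\Rightarrow$ intrinsic'' and ``complete $+$ locally compact $+$ intrinsic $\Rightarrow$ proper geodesic'' (the generalized Hopf--Rinow theorem quoted in the footnote).

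For the ``only if'' direction, I would assume $(K(X), d_H)$ is geodesic. A geodesic space has an intrinsic metric, so by Sosov's theorem (the forward implication, applied using $B(X) = K(X)$) the space $(X,d)$ has an intrinsic metric. Since $X$ is assumed proper, it is in particular complete and locally compact, so Hopf--Rinow upgrades this to: $(X,d)$ is a proper geodesic space, which is what we want.

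For the ``if'' direction, I would assume $(X,d)$ is geodesic, hence that it has an intrinsic metric. Sosov's theorem (the reverse implication) then gives that $(B(X), d_H) = (K(X), d_H)$ has an intrinsic metric. To apply Hopf--Rinow on the hyperspace side I also need $(K(X), d_H)$ to be complete and locally compact; this is exactly where the inheritance properties of \cite{michael} enter, completeness and local compactness of $X$ passing to $K(X)$ (equivalently, the Blaschke selection theorem shows directly that $K(X)$ is proper when $X$ is). With $(K(X), d_H)$ complete, locally compact, and carrying an intrinsic metric, Hopf--Rinow concludes that it is a proper geodesic space, hence geodesic.

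The only genuine subtlety, and the step I would be most careful to state explicitly, is keeping the hypotheses of Sosov's theorem and of Hopf--Rinow aligned with the correct side of each implication: Sosov's theorem is phrased for $B(X)$, so the identification $B(X) = K(X)$ for proper $X$ is doing real work; and Hopf--Rinow needs local compactness of the hyperspace, which is not automatic and genuinely relies on the cited inheritance results (or on Blaschke selection). Everything else is routine.
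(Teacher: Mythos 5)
Your proposal is correct and is essentially the paper's own argument: the paper presents this corollary as an immediate consequence of Sosov's theorem, the generalized Hopf--Rinow theorem, and the inheritance properties of \cite{michael}, which is precisely the chain you spell out (including the identification \(B(X)=K(X)\) for proper \(X\) and the need for completeness and local compactness of the hyperspace on the ``if'' side). You have simply made explicit the details the paper leaves to the reader.
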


This leads us to ask: Given a geodesic bicombing on a proper metric space \(X\), can we define a bicombing on \(K(X)\)? Moreover, can we find a bicombing which satisfies any of the additional properties given in Definition \ref{defn: bicombing}? 
We answer the former question by constructing a bicombing on \(K(X)\) in Proposition \ref{prop: compact set bicombing}, but leave the latter open, as our bicombing is not necessarily conical (Example \ref{ex: not conical}). 
Before proving the proposition, we first observe that the same bicombing from the previous section will not work in this case. 

\begin{ex}\label{ex: failed bicombing} 
Consider the interval \([0,1]\) equipped with the standard absolute value metric and the consistent convex bicombing from (\ref{eq: normed bicombing}). Let \(A,B\in K([0,1])\) be the sets \(A = \{0,1\}\) and \(B = \{0.3 , 0.4\}\). 
Now define 
\[ M = \bigcup_{a\in A} \bigcup_{b\in B} \sigma(a,b,1/2) = \{ 0.15 , 0.2 , 0.65, 0.7\} . \] 
It is straightforward to compute 
\[ d_H(A,B) = 0.6 \quad d_H(A,M) = 0.35 \quad \text{and} \quad d_H(B,M) = 0.3 . \] 
Therefore, \(M\) is not a midpoint of \(A\) and \(B\), and so \(\bigcup_{a\in A} \bigcup_{b\in B} \sigma(a,b,t)\) does not define a geodesic connecting \(A\) and \(B\) (even if we take the closed convex hull). 
\end{ex}

However, if we introduce a couple more tools, then we can construct a geodesic bicombing on \(K(X)\) whenever \(X\) is a proper geodesic space. Define \(P_A\) to be the \emph{metric projection} of \(x\) onto \(A\), 
\[ P_A(x) = \{a\in A : d(x,a) = d(x,A)\} . \] 
That is, \(P_A(x)\) is the set of all points in \(A\) which realize the distance from \(x\) to \(A\). If \(A\) is compact, then certainly \(P_A(x)\) is nonempty for every \(x \in X\). We also define the multifunction \(\omega\) (similar to that in \cite{sosov}) by  
\[ \omega(a,b,t) = \{x \in X : d(a,x) = t d(a,b) \text{ and } d(x,b) = (1-t) d(a,b) \} . \] 
We can think of \(\omega(a,b,t)\) as the collection of \(\sigma(a,b,t)\) over all possible bicombings on \(X\).

\begin{prop}\label{prop: compact set bicombing} 
Let \((X,d)\) be a proper metric space which admits a geodesic bicombing. Then the map 
\[ \Sigma (A,B,t) = \bigg( \bigcup_{a\in A} \bigcup_{u\in P_B(a)} \omega(a,u,t) \bigg) \cup \bigg( \bigcup_{b\in B} \bigcup_{v\in P_A(b)} \omega(v,b,t) \bigg) \] 
defines a geodesic bicombing on \((K(X),d_H)\). 
\end{prop}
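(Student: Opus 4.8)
We argue as in Lemma~\ref{lem: geodesic}. Once we know that $\Sigma(A,B,0)=A$, that $\Sigma(A,B,1)=B$, and that
\[ d_H\big(\Sigma(A,B,s),\Sigma(A,B,t)\big) \leq |t-s|\,d_H(A,B) \qquad (s,t\in[0,1]), \]
the reverse inequality, and hence the identity $d_H(\Sigma(A,B,s),\Sigma(A,B,t)) = |t-s|\,d_H(A,B)$, is forced by the triangle inequality exactly as in that lemma. So the plan breaks into four parts: (i) show $\Sigma(A,B,t)\in K(X)$; (ii) check the endpoint conditions; (iii) record a ``sub-segment'' property of the multifunction $\omega$; and (iv) deduce the displayed inequality.

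For (i), nonemptiness is clear, since $B$ compact gives $P_B(a)\neq\emptyset$ for each $a\in A$, and $\sigma(a,u,t)\in\omega(a,u,t)$. For boundedness, note that if $a\in A$ and $u\in P_B(a)$ then $d(a,u)=d(a,B)\leq d_H(A,B)$, so every point of $\omega(a,u,t)$ lies within $t\,d_H(A,B)$ of $A$; symmetrically the second union lies in $N_{(1-t)d_H(A,B)}(B)$, whence $\Sigma(A,B,t)\subseteq N_{t d_H(A,B)}(A)\cup N_{(1-t)d_H(A,B)}(B)$ is bounded. For closedness we take $x_n\to x$ with, say, infinitely many $x_n\in\omega(a_n,u_n,t)$ where $a_n\in A$ and $u_n\in P_B(a_n)$, and pass to subsequences with $a_n\to a\in A$ and $u_n\to u\in B$. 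Continuity of $p\mapsto d(p,B)$ and of the metric give $d(a,u)=\lim_n d(a_n,u_n)=\lim_n d(a_n,B)=d(a,B)$, so $u\in P_B(a)$, and the two defining identities of $\omega(a_n,u_n,t)$ pass to the limit, yielding $x\in\omega(a,u,t)\subseteq\Sigma(A,B,t)$. Since $X$ is proper, a closed bounded set is compact, so $\Sigma(A,B,t)\in K(X)$. Part (ii) is immediate from $\omega(p,q,0)=\{p\}$ and $\omega(p,q,1)=\{q\}$: at $t=0$ the first union is $A$ and the second is $\bigcup_{b\in B}P_A(b)\subseteq A$, and $t=1$ is symmetric.

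For (iii), the claim is that if $x\in\omega(p,q,s)$ and $t\in[0,1]$, then some $y\in\omega(p,q,t)$ has $d(x,y)=|t-s|\,d(p,q)$. When $s\leq t$, we pick a geodesic from $x$ to $q$ (one exists since $X$ is a geodesic space) and let $y$ be its point at distance $(t-s)d(p,q)$ from $x$; then $d(y,q)=(1-t)d(p,q)$, and since $d(p,y)\leq d(p,x)+d(x,y)=t\,d(p,q)$ while $d(p,y)\geq d(p,q)-d(y,q)=t\,d(p,q)$, we get $y\in\omega(p,q,t)$. The case $s>t$ is symmetric, via a geodesic from $p$ to $x$. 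Part (iv) then mirrors the proof of Theorem~\ref{thm: conical}: any $x\in\Sigma(A,B,s)$ lies in some $\omega(a,u,s)$ with $a\in A$, $u\in P_B(a)$, or in some $\omega(v,b,s)$ with $b\in B$, $v\in P_A(b)$; applying (iii) produces $y$ in the corresponding piece $\omega(a,u,t)$ or $\omega(v,b,t)$ of $\Sigma(A,B,t)$ with $d(x,y)=|t-s|\,d(a,B)\leq|t-s|\,d_H(A,B)$ in the first case and $d(x,y)=|t-s|\,d(b,A)\leq|t-s|\,d_H(A,B)$ in the second. Taking suprema, and running the symmetric argument on points of $\Sigma(A,B,t)$, gives the displayed inequality; the triangle-inequality argument of Lemma~\ref{lem: geodesic} then completes the proof.

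The main obstacle is part (i), and within it the closedness of $\Sigma(A,B,t)$: each of its points carries the auxiliary data of some $a$ and $u\in P_B(a)$ (or $b$ and $v\in P_A(b)$) that produced it, and to take limits one needs these witnesses to subconverge with the limiting pair again lying in the graph of the metric projection — precisely where the compactness of $A$ and $B$, rather than merely their boundedness, is used. Everything else is routine, being a direct adaptation of the estimates already carried out for $CB(X)$.
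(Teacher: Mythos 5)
Your proposal is correct and follows essentially the same route as the paper: verify compactness of $\Sigma(A,B,t)$ by a subsequence argument using properness of $X$ and compactness of $A$ and $B$, then establish $d_H(\Sigma(A,B,s),\Sigma(A,B,t))\leq |t-s|\,d_H(A,B)$ by moving each point of $\omega(p,q,s)$ along a geodesic to a point of $\omega(p,q,t)$, and finally upgrade to equality via the triangle-inequality bootstrap of Lemma \ref{lem: geodesic}. The only cosmetic differences are that you realize the sub-segment step with a one-sided geodesic from $x$ to $q$ (the paper concatenates geodesics through $z$ to get a full geodesic from $a$ to $u$), and you explicitly check the endpoint conditions, which the paper leaves implicit.
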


\begin{proof} 
First, we need to verify that \(\Sigma(A,B,t)\) is in fact compact. Let \(A,B\in K(X)\) and \(t\in [0,1]\) be given and define 
\[ Z_t = \bigcup_{a\in A} \bigcup_{u\in P_B(a)} \omega(a,u,t) . \] 
If we can show that \(Z_t\) is closed and bounded, then it is compact (as \(X\) is proper); and therefore \(\Sigma(A,B,t)\) is compact, as it is the union of two compact sets. 

% closed

To verify that \(Z_t\) is closed, let \(\{z_n\}_n\) be a sequence in \(Z_t\) such that \(z_n\to x\) for some \(x\in X\). 
By the definition of \(Z_t\), there is a sequence \(\{a_n\}_n\subseteq A\) such that \(z_n \in \omega(a_n,u_n,t)\) for each \(n\) (where \(u_n\in P_A(a_n)\)). 
Since \(A\) is compact, \(\{a_n\}_n\) contains a convergent subsequence, say \(\{a_{n_k}\}_k\), which converges to \(a\in A\). 
Furthermore, since \(B\) is compact, \(\{u_{n_k}\}_k\) contains a convergent subsequence, say \(\{u_{n_{k_j}}\}_j\), converging to some \(u\in B\). 
For simplicity, we substitute the index \(n = n_{k_j}\) so that 
\[ z_n \to x , \quad a_n \to a , \quad \text{and } u_n \to u .\] 
Noting that \(d(a_{n},u_{n}) = d(a_{n},B)\) for all \(n\), we have \(d(a,u) = d(a,B)\), so \(u\in P_B(a)\). 
By continuity of the distance function, \(d(a_n , z_n) \to d(a,x)\) and \(d(z_n, u_n) \to d(x,u)\). 
Given that \(z_n \in \omega(a_n,u_n,t)\), we find \(d(a,x) = td(a,u)\) and \(d(x,u) = (1-t)d(a,u)\), so 
\[ x\in \omega(a,u,t) \subseteq Z_t . \] 
Thus, \(Z_t\) is closed. 

% bounded 

To show that \(Z_t\) is bounded, let \(z,z'\in Z_t\) be given and fix \(a,a'\in A\) such that \(z\in \omega(a,u,t)\) and \(z'\in \omega(a',u',t)\) (where \(u\in P_B(a)\) and \(u'\in P_B(a')\)). Applying the triangle inequality, and the definition of \(\omega\), we find 
\begin{align*} 
d(z,z') & \leq d(z,a) + d(z',a') + d(a,a') 
\\ & = td(a,B) + td(a',B) + d(a,a') 
\\ & \leq 2td_H(A,B) + \text{diam}(A) . 
\end{align*} 
Since \(A\) and \(B\) are compact, \(2td_H(A,B) + \text{diam}(A)\) is finite, so \(Z_t\) is bounded. We conclude that \(\Sigma(A,B,t)\) is compact.

Finally, we employ the same technique used in Lemma \ref{lem: geodesic} to check that \(\Sigma(A,B,\cdot)\) is a linearly reparametrized geodesic.

Let \(s,t\in [0,1]\) and \(z\in \Sigma(A,B,s)\) be given. Either 
(i) there are points \(a\in A\) and \(u\in P_B(a)\) such that \(z\in \omega(a,u,s)\); or 
(ii) there are points \(b\in B\) and \(v\in P_A(b)\) such that \(z\in \omega(v,b,t)\). 
If case (i), then let \(\gamma: [0,1]\to X\) be a linearly reparametrized geodesic such that \(\gamma(0) = a\), \(\gamma(1) = u\), and \(\gamma(s) = z\). Setting \(z_t = \gamma(t) \in \Sigma(A,B,t)\), 
\[ d(z,\Sigma(A,B,t)) \leq d(z,z_t) = |t-s| d(a,u) = |t-s| d(a,B) \leq |t-s| d_H(A,B) . \] 
If case (ii), then let \(\eta: [0,1]\to X\) be a linearly reparametrized geodesic such that \(\eta(0) = v\), \(\eta(1) = b\), and \(\eta(s) = z\). Setting \(z_t = \eta(t) \in \Sigma(A,B,t)\), 
\[ d(z,\Sigma(A,B,t)) \leq d(z,z_t) = |t-s| d(v,b) = |t-s| d(b,A) \leq |t-s| d_H(A,B) . \] 
In either case, \(d(z,\Sigma(A,B,t)) \leq |t-s|d_H(A,B)\), so 
\[ \sup_{z\in \Sigma(A,B,s)} d(z,\Sigma(A,B,t)) \leq |t-s| d_H(A,B) . \] 
As usual, a symmetrical argument yields 
\[ \sup_{z\in \Sigma(A,B,t)} d(z,\Sigma(A,B,s)) \leq |t-s| d_H(A,B) \] 
so we conclude that \(d_H(\Sigma(A,B,s) , \Sigma(A,B,t) ) \leq |t-s| d_H(A,B)\). 
\end{proof}

Unfortunately, the bicombing given in Proposition \ref{prop: compact set bicombing} is not generally conical, even if the bicombing of the base space is consistent and convex, as the following example shows. 

\begin{ex}\label{ex: not conical}
As in Example \ref{ex: failed bicombing}, we consider the interval \([0,1]\) equipped with the standard absolute value metric and the consistent convex bicombing from (\ref{eq: normed bicombing}). Let \(A,B,C\in K([0,1])\) be the sets 
\[ A = \{0,1\} \quad B = \{0.3 , 0.4 \} \quad C = \{0.1 , 0.6\} . \] 
Using the bicombing \(\Sigma\) from Proposition \ref{prop: compact set bicombing}, we find 
\[ \Sigma(A,B,1/2) = \{ 0.15 , 0.2 , 0.7\} \quad \text{and} \quad \Sigma(A,C,1/2) = \{0.05 , 0.8\} . \] 
Furthermore, 
\[ d_H\big( \Sigma(A,B,1/2) , \Sigma(A,C,1/2) \big) = 0.15 > 0.1 = \frac{1}{2} d_H\big( B, C \big) \] 
so the bicombing of Proposition \ref{prop: compact set bicombing} is not conical in this case. 
\end{ex}

\begin{ack}
The author would like to thank Peter Oberly, Joel H. Shapiro, and J.J.P. Veerman for many helpful comments. 
\end{ack}

\bibliography{bicombing} % BIB % 
\bibliographystyle{plain}

\end{document}